\numberwithin{equation}{section}
\newtheorem{thm}{Theorem}[section]
\theoremstyle{definition}
\newtheorem{rem}[thm]{Remark}
\newcommand\R{{\mathbb R}}
\newcommand\C{{\mathbb C}}
\newcommand\Tma{T_{\mathrm{max}}}
\newcommand\Comp{{\mathrm{c}}}
\newcommand\Eqdef{\stackrel{\text{\tiny def}}{=}}
\newcommand\Ens{{\mathcal E}}
\newcommand\Ical{{\mathcal I}}
\newcommand\dist{{\mathrm{d}}}
\newcommand\goto{\mathop{\longrightarrow}}
\newcommand\MScN[1]{\href{http://www.ams.org/mathscinet-getitem?mr=#1}{\nolinkurl{(#1)}}}
\newcommand\DOI[1]{\href{http://dx.doi.org/#1}{(doi: \nolinkurl{#1})}}
\newcommand\LINK[1]{\href{#1}{(link: \nolinkurl{#1})}}
\newcommand\DI{u_0 }
\newcommand\DIb{v_0 }
\begin{document}

\title{A Fujita-type blowup result and low energy scattering for a nonlinear Schr\"o\-din\-ger equation}

\def\shorttitle{A Fujita-type blowup result and low energy scattering}

\author[T. Cazenave]{Thierry Cazenave$^1$}
\address{$^1$Universit\'e Pierre et Marie Curie \& CNRS, Laboratoire Jacques-Louis Lions,
B.C. 187, 4 place Jussieu, 75252 Paris Cedex 05, France}
\email{\href{mailto:thierry.cazenave@upmc.fr}{thierry.cazenave@upmc.fr}}

\author[S. Correia]{Sim\~ao Correia$^2$}
\address{$^2$Centro de Matem\'atica e Aplica\c c\~oes Fundamentais, Universidade de Lisboa,
Avenida Prof. Gama Pinto 2, 1649--003 Lisboa, Portugal}
\email{\href{mailto:sfcorreia@fc.ul.pt}{sfcorreia@fc.ul.pt}}

\author[F. Dickstein]{Fl\'avio Dickstein$^3$}
\address{$^3$Instituto de Matem\'atica, Universidade Federal do Rio de Janeiro, Caixa Postal 68530, 21944--970 Rio de Janeiro, R.J., Brazil}
\email{\href{mailto:flavio@labma.ufrj.br}{flavio@labma.ufrj.br}}

\author[F. B.~Weissler]{Fred B.~Weissler$^4$}
\address{$^4$Universit\'e Paris 13,  Sorbonne Paris  Cit\'e, CNRS UMR 7539 LAGA, 99 Avenue J.-B. Cl\'e\-ment, F-93430 Villetaneuse, France}
\email{\href{mailto:weissler@math.univ-paris13.fr}{weissler@math.univ-paris13.fr}}

\thanks{Research supported by the ``Brazilian-French Network in Mathematics"}
\thanks{Fl\'avio Dickstein  was partially supported by CNPq (Brasil), and by the Fondation Sciences Math\'ematiques de Paris.}
\thanks{Sim\~ao Correia was partially supported by FCT (Portugal) through the grant SFRH/BD/96399/2013.}

\keywords{Nonlinear Schr\"o\-din\-ger equation, Fujita critical exponent, low energy scattering}

\subjclass[2010]{Primary: 35Q55. Secondary: 35Q56, 35B33, 35B40, 35B44}

\begin{abstract}
In this paper we consider the nonlinear Schr\"o\-din\-ger equation $i u_t +\Delta u +\kappa   |u|^\alpha u=0$. We prove that if $\alpha <\frac {2} {N}$ and $\Im \kappa <0$, then every nontrivial $H^1$-solution blows up in finite or infinite time. 
In the case $\alpha >\frac {2} {N}$ and $\kappa \in \C$, we improve the existing low energy scattering results in dimensions $N\ge 7$. More precisely, we prove that if $ \frac {8} {N + \sqrt{ N^2 +16N }} < \alpha \le \frac {4} {N} $, then small data give rise to global, scattering solutions in $H^1$.
\end{abstract}

\maketitle

\section{Introduction}

The main purpose of this article is to prove a Fujita-type blowup result for the nonlinear Schr\"o\-din\-ger equation 
\begin{equation} \label{MNLS} 
i u_t +\Delta u +\kappa   |u|^\alpha u=0.
\end{equation} 
Given an initial value $\DI$, the Cauchy problem for~\eqref{MNLS} has the equivalent  form
\begin{equation} \label{MNLS3} 
u(t)= e^{it \Delta }\DI + i\kappa \int _0^t e^{i(t-s) \Delta }( |u|^\alpha u)(s)\, ds.
\end{equation} 
As is well known, the Cauchy problem~\eqref{MNLS3} is locally well-posed in $H^1 (\R^N ) $ provided $\alpha <\frac {4} {N-2}$. (See~\cite{Kato1}.) More precisely, given $\DI\in H^1 (\R^N ) $, there exist a maximal existence time $\Tma = \Tma (\DI) \in (0, \infty ]$ and a unique solution $u\in C([0, \Tma) , H^1 (\R^N ) )$ of~\eqref{MNLS3}. Moreover, if $\Tma <\infty $, then $u$ blows up at $\Tma $ in the sense that $ \| u(t) \| _{ H^1 }\to \infty $ as $t \uparrow \Tma$. 

Recall that Fujita~\cite{Fujita} proved that if $\alpha < \frac {2} {N}$, then all positive solutions of the nonlinear heat equation
\begin{equation} \label{NLH} 
 u_t = \Delta u +  |u|^\alpha u
\end{equation} 
on $\R^N $ blow up in finite time. In addition, if $\alpha >\frac {2} {N}$, then for initial values sufficiently small in an appropriate sense, the corresponding solution of~\eqref{NLH} is global in time. See~\cite{Fujita}. 
In the intervening years, this classical result has lead to an extensive literature, see the two survey articles~\cite{Levine1, DengL}.
However, the extensions have always been to parabolic equations. 

It turns out that there is a similar blowup dichotomy for the nonlinear Schr\"o\-din\-ger equation~\eqref{MNLS}. The blowup part of this dichotomy concerns the case $\Im \kappa  <0$. Indeed, if $\kappa  \in \R$, in which case~\eqref{MNLS}  becomes the standard nonlinear Schr\"o\-din\-ger equation, well-known energy estimates imply that if $\alpha <\frac {4} {N}$, then all  $H^1$-solutions are global in time and remain bounded. These arguments yield the same result if $\Im \kappa  >0$. On the other hand, we prove that if $\Im \kappa  <0$ and $\alpha <\frac {2} {N}$, there is no global, nontrivial solution of~\eqref{MNLS} that remains bounded in $H^1 (\R^N ) $. 
More precisely, we prove the following result.

\begin{thm} \label{fUZB2} 
Assume $\Im \kappa <0$ and $\alpha <\frac {2} {N}$. It follows that there exists $\delta >0$ such that if $u \not \equiv 0$ is a global $H^1$ solution of~\eqref{MNLS}, then
\begin{equation} \label{fUZB2:1} 
 \sup  _{ 0\le s\le t } \|\nabla u ( s ) \| _{ L^2 } \ge \delta  \| u(0)\| _{ L^2 }^{\frac {N+2} {N}} t ^{\frac {2-N\alpha } {N\alpha }} 
\end{equation} 
for all $t>0$. Moreover,
\begin{equation} \label{fEqUB1:1} 
t^{-\frac {2-N\alpha } {N\alpha }} \sup _{ 0\le s\le t }  \|\nabla u (s) \| _{ L^2 }  \goto _{ t\to \infty  }\infty . 
\end{equation} 
\end{thm}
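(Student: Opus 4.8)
The plan is to build everything on the mass balance for \eqref{MNLS}. Multiplying the equation by $\bar u$, integrating over $\R^N$, and taking imaginary parts yields $\frac{d}{dt}\|u(t)\|_{L^2}^2=-2\,\Im\kappa\,\|u(t)\|_{L^{\alpha+2}}^{\alpha+2}$, so, since $\Im\kappa<0$, the function $M(t):=\|u(t)\|_{L^2}^2$ is strictly increasing; in particular $M(t)\ge M(0)=\|u(0)\|_{L^2}^2$ for all $t\ge0$, and the nonlinearity acts as a strictly positive source feeding the $L^2$ norm. Next, the Gagliardo--Nirenberg inequality gives $\|v\|_{L^{\alpha+2}}^{\alpha+2}\le C\|\nabla v\|_{L^2}^{N\alpha/2}\|v\|_{L^2}^{\alpha+2-N\alpha/2}$, and the hypothesis $\alpha<\frac 2N$ is exactly what makes the exponent $\frac{N\alpha}2$ of $\|\nabla v\|_{L^2}$ smaller than $1$. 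Writing $\mathcal N(t):=\sup_{0\le s\le t}\|\nabla u(s)\|_{L^2}$ and $b:=\frac{\alpha+2}2-\frac{N\alpha}4$, the mass balance becomes, for $s\in[0,t]$, the differential inequality $M'(s)\le C'\,\mathcal N(t)^{N\alpha/2}M(s)^{b}$; using the monotonicity $M(s)\le M(t)$ and integrating --- via the substitution $M^{1-b}$ when $N\ge3$ (so $b<1$), $\log M$ when $N=2$ (so $b=1$), and again $M^{1-b}$, now with $1-b<0$, when $N=1$ --- one extracts a bound of the form $\mathcal N(t)^{N\alpha/2}\,t\gtrsim$ (an appropriate power of $M(t)$ minus the corresponding power of $M(0)$). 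Hence \eqref{fUZB2:1} will follow once $M(t)$ is shown to grow at a definite polynomial rate in $t$ and in $\|u(0)\|_{L^2}$; the precise exponents are forced by the scaling invariance of \eqref{MNLS} under $u(t,x)\mapsto\lambda^{2/\alpha}u(\lambda^2t,\lambda x)$, under which \eqref{fUZB2:1} is itself invariant.

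The crux is therefore the quantitative growth of $M$, and here $\alpha<\frac 2N$ enters decisively. Heuristically, if $M$ stayed bounded then $\int_0^\infty\|u(s)\|_{L^{\alpha+2}}^{\alpha+2}\,ds<\infty$, i.e.\ $u$ would essentially behave like a free wave; but $e^{it\Delta}$ is unitary on $L^2$ while dispersing only at the rate $\|e^{it\Delta}\varphi\|_{L^{\alpha+2}}\lesssim |t|^{-\frac{N\alpha}{2(\alpha+2)}}$, so the source $\|u(s)\|_{L^{\alpha+2}}^{\alpha+2}$ should behave like $s^{-N\alpha/2}$, which --- precisely because $\alpha<\frac 2N$ --- is integrable near $s=0$ but not near $s=\infty$: a contradiction. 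I would make this quantitative by a bootstrap on the Duhamel formula $u(t)=e^{it\Delta}u(0)+i\kappa\int_0^te^{i(t-s)\Delta}(|u|^\alpha u)(s)\,ds$. Starting from $M(s)\ge\|u(0)\|_{L^2}^2$, one combines the unitarity of $e^{it\Delta}$ on $L^2$, dispersive (or Strichartz) estimates bounding the nonlinear Duhamel term, and the elementary lower bound $\|v\|_{L^{\alpha+2}}\gtrsim R^{-\frac{N\alpha}{2(\alpha+2)}}\|v\|_{L^2(B_R)}$ (a fixed fraction of the mass, spread at worst over a ball of radius $O(s)$, still forces $\|u(s)\|_{L^{\alpha+2}}^{\alpha+2}\gtrsim s^{-N\alpha/2}$), to reach a Riccati-type lower bound $M'(s)\gtrsim M(s)^{\gamma}\langle s\rangle^{-N\alpha/2}$ with $\gamma>1$; integrating and iterating then sharpens the exponent of $M(t)$ up to the scaling-dictated value, which, fed back into the first paragraph, yields \eqref{fUZB2:1}.

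Finally, \eqref{fEqUB1:1} follows by applying \eqref{fUZB2:1} to the time-translated solutions $u(\cdot+\tau)$, which are again nontrivial global solutions with initial $L^2$ norm $\|u(\tau)\|_{L^2}$ nondecreasing in $\tau$ and, by the mass-growth step, tending to $+\infty$; since $\sup_{\tau\le s\le\tau+t}\|\nabla u(s)\|_{L^2}\le\mathcal N(\tau+t)$, the choice $\tau=t$ gives $\mathcal N(2t)\gtrsim\|u(t)\|_{L^2}^{(N+2)/N}\,t^{(2-N\alpha)/(N\alpha)}$, so that $t^{-\frac{2-N\alpha}{N\alpha}}\,\mathcal N(t)\to\infty$. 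The main obstacle is the bootstrap of the second paragraph: one must control the nonlinear Duhamel term sharply enough that, uniformly in $u(0)$, the source $\|u(s)\|_{L^{\alpha+2}}^{\alpha+2}$ remains large in an averaged sense along the whole half-line --- in particular, making the comparison with the free flow rigorous for general $H^1$ data, where no extra integrability or weight on $u(0)$ is at hand, and handling the degenerate cases $N\in\{1,2\}$ of the substitution used above.
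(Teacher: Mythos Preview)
Your first and third paragraphs are reasonable in spirit, but the argument collapses at the step you yourself flag as ``the crux'': the quantitative lower bound on $\|u(s)\|_{L^{\alpha+2}}$. The assertion that ``a fixed fraction of the mass, spread at worst over a ball of radius $O(s)$, still forces $\|u(s)\|_{L^{\alpha+2}}^{\alpha+2}\gtrsim s^{-N\alpha/2}$'' is precisely what has no justification here. The Schr\"odinger flow has infinite speed of propagation; any approximate localisation of mass in a ball of radius $O(s)$ (as in the free case) relies on a virial-type identity or on integration against a cutoff, and the error term in that computation is controlled by $\|\nabla u\|_{L^2}$ --- the very quantity you are trying to bound from below. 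Your bootstrap via Duhamel and dispersive estimates does not supply this either: for merely $H^1$ data there is no a~priori decay of $\|e^{is\Delta}u(0)\|_{L^{\alpha+2}}$, and estimating the Duhamel term requires the same control on $u$ that you are attempting to derive. Note also that your Riccati inequality $M'(s)\gtrsim M(s)^{\gamma}\langle s\rangle^{-N\alpha/2}$ with $\gamma>1$ would, since $\int^\infty s^{-N\alpha/2}\,ds=\infty$, force $M$ to blow up in finite time --- a conclusion strictly stronger than the theorem and one the paper explicitly leaves open; this is a strong sign that the inequality cannot be obtained as stated.

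The paper's route is different and avoids the localisation issue by building it into the identity itself. One multiplies the equation by $\varphi_\lambda^2\bar u$ with $\varphi_\lambda(x)=\psi(\lambda x)$ a fixed compactly supported cutoff, obtaining for $f_\lambda(t)=\|u\varphi_\lambda\|_{L^2}$ the inequality
\[
f_\lambda'\ \ge\ -2\nu\lambda\,K_T\ +\ (-\Im\kappa)\,\lambda^{N\alpha/2} f_\lambda^{\alpha+1},
\qquad K_T=\sup_{0\le s\le T}\|\nabla u(s)\|_{L^2},
\]
where the gradient norm appears \emph{as the boundary term} produced by $\nabla\varphi_\lambda$, and the nonlinear lower bound comes from H\"older on the support of $\varphi_\lambda$. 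Choosing $\lambda$ so that the positive term dominates shows $f_\lambda$ blows up by time $\sim\lambda^{-N\alpha/2}f_\lambda(0)^{-\alpha}$, which is then played against the choice of $\lambda$ (dictated by $K_T$) to give~\eqref{fUZB2:1}; a refined choice $\mu(T)$ and integration over $(T/2,T)$ give~\eqref{fEqUB1:1}. In short, the cutoff is exactly the device that makes rigorous the link ``mass localisation $\leftrightarrow$ gradient norm'' that your outline assumes but does not prove.
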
 

In other words, every nontrivial $H^1$-solution blows up in finite or infinite time. 
We expect that blowup in fact occurs in finite time. 

Concerning the global existence part of the dichotomy, it is natural to conjecture that if $\alpha >\frac {2} {N}$, then initial values which are sufficiently small in some norm lead to global solutions which remain bounded and have scattering states in $H^1$. 
This is in fact known in dimension $N=1,2,3$. (See~\cite{CazenaveW1, GinibreOV, NakanishiO}.) In higher dimension $N\ge 4$, 
the best available result seems to be global existence and scattering for small data (i.e. low energy scattering) when 
$N\alpha + 2\alpha  + 2\alpha ^2 >  4$, i.e. $\alpha > \alpha _1$ where 
\begin{equation} \label{fAl1} 
\alpha _1 = \frac {8} {N+2+ \sqrt{ N^2+4 N +36 } } 
\end{equation} 
 (See~\cite{GinibreOV, NakanishiO}.) 

The contribution of this paper to the case $\alpha >\frac {2} {N}$ is that we improve the condition  $\alpha >\alpha _1$ when $N\ge 7$, to $\alpha  > \alpha _2$ with
\begin{equation} \label{fAl2} 
\alpha _2 = \frac {8} {N + \sqrt{ N^2 +16N }} 
\end{equation} 
Our result in this case is the following.

\begin{thm} \label{eGE1} 
Set $X= H^1 (\R^N ) \cap L^2 (\R^N ,  |x|^2 dx)$ equipped with its natural norm.
Let $\kappa \in \C$ and assume
\begin{equation} \label{eGE1:1} 
N\ge 3, \quad \alpha _2 < \alpha  < \frac {4} {N}
\end{equation} 
where $\alpha _2$ is given by~\eqref{fAl2}. 
Let $\DI \in X$ satisfy $\DIb \in H^2 (\R^N ) $ and $ |\cdot | \DIb \in H^1 (\R^N ) $, where $\DIb (x)= e^{i \frac { |x|^2} {4}} \DI (x)$. 
If $  \| \DIb \| _{ H^2 } $ is sufficiently small, then the solution of~\eqref{MNLS3} is global. Moreover, there exists $u^+\in X$ such that $e^{-it \Delta }u(t) \to u^+ $ in $X$ as $t\to \infty $. 
\end{thm}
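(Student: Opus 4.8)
The plan is to pass to pseudoconformal variables, which convert global existence and scattering for \eqref{MNLS} into a local-existence problem for an auxiliary nonlinear Schr\"odinger equation carrying an integrable, time-dependent singular weight. If $u$ solves \eqref{MNLS3} on $[0,\Tma)$, set
\[
v(s,y)=(1-s)^{-N/2}\,e^{i\frac{|y|^2}{4(1-s)}}\,u\!\left(\tfrac{s}{1-s},\tfrac{y}{1-s}\right),\qquad 0\le s<1 ,
\]
so that $t=\frac{s}{1-s}\in[0,\infty)$ corresponds to $s\in[0,1)$ and the endpoint $s=1$ to $t=\infty$. A direct computation gives $v(0)=\DIb$ and
\[
i v_s+\Delta v+\kappa\,(1-s)^{\frac{N\alpha}{2}-2}\,|v|^\alpha v=0 ,\qquad 0<s<1
\]
(possibly after replacing $u$ by $\bar u$, which merely conjugates $\kappa$ and is immaterial). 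Since $N\ge3$ forces $\alpha_2>\frac2N$, the hypothesis $\alpha>\alpha_2$ gives $\frac{N\alpha}{2}-2>-1$, so the weight $(1-s)^{\frac{N\alpha}{2}-2}$ is integrable on $[0,1]$: the singular time $t=\infty$ becomes an ordinary time for $v$. Moreover $u$ is global and $\E{-it}u(t)$ converges in $X$ as $t\to\infty$ if and only if $v$ extends to an element of $C([0,1],X)$, in which case $u^+\in X$ is the inverse transform of the endpoint value $v(1)$. This is why the smallness and regularity hypotheses ($\DIb\in H^2$, $|\cdot|\DIb\in H^1$) are imposed on $\DIb=v(0)$ rather than on $\DI$: they furnish precisely the small data at the required level --- the $X$-norm together with the weighted derivative quantities ($|\cdot|\nabla$, $|\cdot|^2$) that the contraction will track.

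Next I would solve the auxiliary equation by a contraction-mapping argument on a ball of radius $\sim\|\DIb\|_{H^2}$ in a space of the form $C([0,1],X)\cap L^{q}((0,1),W^{1,r})\cap\dots$, built from Strichartz-admissible pairs of the free group $\E{t}$ and carrying the vector-field-type norms obtained by commuting $\nabla$ and multiplication by $|y|$ through Duhamel's formula. The only delicate point is the nonlinear estimate, namely bounding
\[
\int_0^1(1-s)^{\frac{N\alpha}{2}-2}\,\bigl\|\,|v(s)|^\alpha v(s)\,\bigr\|_{W^{1,r'}}\,ds
\]
and its weighted analogues by H\"older in time against the singular weight together with Sobolev embedding and the Strichartz norms of $v$. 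Integrability of the weight already forces $\alpha>\frac2N$; the point is that, by balancing the time exponent of the Strichartz pair against $\frac{N\alpha}{2}-2$ and then choosing the admissible pair optimally --- rather than fixing $r=\alpha+2$ as in \cite{GinibreOV, NakanishiO} --- one reaches the weaker requirement $N\alpha(1+\alpha)>4$, i.e.\ $\alpha>\alpha_2$. This optimization is exactly where the improvement over $\alpha_1$ comes from, and it is also the main obstacle: for small $\alpha$ the map $z\mapsto|z|^\alpha z$ is only H\"older, so the $W^{1,r}$ (i.e.\ $X$-level) estimate cannot spend a full derivative naively and must be carried out with fractional-derivative/paraproduct bounds, or by trading regularity for the Gagliardo--Nirenberg decay $\|v(s)\|_{L^p}\lesssim(1-s)^{N(\frac1p-\frac12)}$ that the transform builds in; one must also keep every bound uniform up to $s=1$, so that the $t\to\infty$ limits exist in the $X$-topology and not merely weakly.

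Finally, once $v\in C([0,1],X)$ with small norm is obtained, transforming back yields a solution of \eqref{MNLS3} on all of $[0,\infty)$; by uniqueness it coincides with the solution of the local theory, and the blowup alternative rules out $\Tma<\infty$, so $u$ is global. Continuity of $v$ up to $s=1$ together with integrability of the weight give, through $\E{-it}u(t)=\E{-it}\DI+i\kappa\int_0^t\E{-is}(|u|^\alpha u)(s)\,ds$, that $\E{-it}u(t)$ is Cauchy in $X$ as $t\to\infty$; its limit $u^+\in X$ is the inverse pseudoconformal transform of $v(1)$, and the $X$-valued continuity of the flow and the uniqueness of $u^+$ follow along the way.
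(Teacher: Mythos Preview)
Your overall architecture---pseudoconformal transformation reducing scattering to a local problem on $[0,1]$ for an auxiliary NLS with weight $(1-s)^{\frac{N\alpha}{2}-2}$, then a contraction argument for small data, then transforming back---is exactly the paper's. The gap is at precisely the point you flag as delicate: the nonlinear Strichartz estimate.

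You assert that the threshold $\alpha>\alpha_2$ is attained ``by choosing the admissible pair optimally.'' This is not what the paper does, and admissible pairs are the reason the earlier works you cite only reached $\alpha_1$. The paper's improvement rests on a Strichartz inequality with \emph{non-admissible} pairs,
\[
\Bigl\|\int_0^\cdot e^{i(\cdot-s)\Delta}f(s)\,ds\Bigr\|_{L^a((0,1),L^\rho)}\le C\,\|f\|_{L^{\tilde a'}((0,1),L^{\rho'})},\qquad \frac{1}{a}+\frac{1}{\tilde a}=\frac{2}{\gamma},
\]
where $\rho=\frac{N(\alpha+2)}{N+\alpha}$ is fixed by the Sobolev embedding $\|v\|_{L^{N\rho/(N-\rho)}}\le C\|\nabla v\|_{L^\rho}$, $(\gamma,\rho)$ is the admissible pair with this $\rho$, and $a\ge\gamma$ is taken large. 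The H\"older-in-time balance then requires $h\in L^\mu(0,1)$ with $\frac{1}{\mu}=\frac{4-(N-4)\alpha}{2(\alpha+2)}-\frac{\alpha}{a}$; the integrability condition $\frac{1}{\mu}>\frac{4-N\alpha}{2}$ becomes, as $a\to\infty$, precisely $N\alpha(1+\alpha)>4$, i.e.\ $\alpha>\alpha_2$. Without the freedom to push $a$ beyond the admissible value $\gamma$, this limit is unavailable and the resulting threshold is strictly worse. Plain $L^1$-integrability of the weight (equivalent to $\alpha>\frac{2}{N}$) plays no role in the argument.

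A secondary remark: your concern that the H\"older regularity of $z\mapsto|z|^\alpha z$ forces paraproduct or fractional-derivative machinery is unfounded here. The paper differentiates the nonlinearity pointwise and obtains $\|\nabla(|v|^\alpha v)\|_{L^{\rho'}}\le C\|\nabla v\|_{L^\rho}^{\alpha+1}$ directly; the low regularity only obstructs a \emph{Lipschitz} bound at the $W^{1,\rho}$ level, and that is handled by the standard device of running the contraction in the $L^a((0,1),L^\rho)$ metric while the invariant ball controls the gradient and the weighted norm.
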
 

 Note  that $\alpha _2< \frac {4} {N}$ and behaves as $\frac {4} {N}$ as $N\to \infty $, not as $\frac {2} {N}$.
Thus there is still a significant gap in high dimensions between the conjecture and the known results.

A fundamental technical tool used in the proofs of the above cited results~\cite{CazenaveW1, GinibreOV, NakanishiO} is the Strichartz inequalities. 
These inequalities involve space-time integrals, where the pair of Lebesgue indices satisfy a certain relationship. 
Usually, the pairs of Lebesgue indices  are  {\it admissible} (see~\cite[p.~808]{CazenaveW2}),
and in particular the low energy scattering results of~\cite{CazenaveW1, GinibreOV, NakanishiO} use admissible pairs. 
Strichartz estimates with non-admissible pairs first appeared in~\cite[Lemma~2.1]{CazenaveW1}, but were not used there for low energy scattering. 
They have subsequently been developed in~\cite{Kato2, Foschi, Vilela}. 
In this paper, we use Strichartz estimates with non-admissible pairs along with the low energy scattering argument of~\cite{CazenaveW1}. This combination enables us to prove low energy scattering for $\alpha >\alpha _2$. 

It is worth noting that the exponent $\alpha =\frac {2} {N}$ is also the critical exponent related to scattering of solutions of~\eqref{MNLS}.  When $\alpha <\frac {2} {N}$ in dimension $N\ge 2$,  it is known that no nonzero solution of~\eqref{MNLS} can be global and scatter in $L^2 (\R^N ) $. 
(See Strauss~\cite{Strauss}, Theorem~3.2 and Example~3.3, p.~68.) 

Theorems~\ref{fUZB2} and~\ref{eGE1} are proved respectively in Sections~\ref{Blowup} and~\ref{SmallData} below. 

\section{Blowup} \label{Blowup} 

The remarkable feature of~\eqref{MNLS} is the identity
\begin{equation} \label{fN1} 
\frac {1} {2} \frac {d} {dt} \int  _{ \R^N  } |u|^2 = -\Im \kappa  \int  _{ \R^N  }  |u|^{\alpha +2}
\end{equation}  
which holds for all $0\le t< \Tma$. (When $\Im \kappa =0$, this is the conservation of charge for the standard NLS.) 
We observe that, were the equation set on a bounded domain with Dirichlet boundary conditions, equation~\eqref{fN1} together with H\"older's inequality would imply that no $H^1$ solution can be global (for all $\alpha >0$), when $\Im \kappa  <0$.

\begin{proof} [Proof of Theorem~$\ref{fUZB2}$]
Let $u(t) \not  \equiv 0$ be a global $H^1$ solution of~\eqref{MNLS}. 
The idea of the proof is to multiply equation~\eqref{MNLS} by a cut-off function, so that the $L^2$ norm can be controlled by the $L^{\alpha +2}$ norm.
We fix the cut-off function $\psi (x)= \nu  \theta ( |x|)$, where
\begin{equation} \label{fEZq12:1}  
\theta (r)= 
\begin{cases} 
1 & 0\le r\le 1 \\
2-r & 1\le r\le 2 \\
0 & r\ge 2
\end{cases} 
\end{equation} 
and $\nu\in \R$ is such   $\|\psi \| _{ L^2 } =1$. Given $\lambda >0$, set
\begin{equation} \label{fEZq12b:1}  
\varphi _\lambda (x)=  \psi  (\lambda x).
\end{equation} 
It follows in particular that $\varphi _\lambda  \in C_\Comp (\R^N ) \cap W^{1, \infty }( \R^N )$, $\varphi  _\lambda \ge 0$, 
\begin{equation} \label{fEZq12b2:1}  
 \|\varphi _\lambda  \| _{ L^2 } =  \lambda ^{-\frac {N} {2}} \quad  \text{and}\quad   \| \nabla \varphi _\lambda \| _{ L^\infty  } = \nu\lambda  .
\end{equation}  
Multiplying equation~\eqref{MNLS} by $\varphi _\lambda ^2 \overline{u} $ and taking the imaginary part, we obtain
\begin{equation} \label{fEZq4:1} 
\frac {1} {2} \frac {d} {dt} \int  _{ \R^N  }   |u|^2 \varphi _\lambda  ^2 =2 \Im \int  _{ \R^N  }  \overline{u} \varphi _\lambda \nabla u \cdot \nabla \varphi _\lambda -\Im \kappa   \int  _{ \R^N  } |u|^{\alpha +2} \varphi _\lambda ^2.
\end{equation} 
(To be precise, the equation makes sense in $H^{-1}$, so we take the $H^{-1}-H^1$ duality bracket of the equation with $\varphi _\lambda ^2 \overline{u} \in H^1$.)
Set 
\begin{equation} \label{fEZq4b:1} 
f_\lambda (t)=   \| u \varphi _\lambda \| _{ L^2 }
\end{equation} 
and
\begin{equation}  \label{fEZq6:1} 
K_t =   \| \nabla u\| _{ L^\infty ((0,t), L^2) } .
\end{equation} 
It follows from H\"older's inequality and~\eqref{fEZq12b2:1}  that
\begin{equation*} 
f_\lambda (t)^{\alpha +2} \le  \| \varphi _\lambda \| _{ L^2 }^\alpha    \int  _{ \R^N  } |u|^{\alpha +2} \varphi _\lambda ^2  =  \lambda ^{-\frac {N\alpha } {2}} \int  _{ \R^N  } |u|^{\alpha +2} \varphi _\lambda ^2  ,
\end{equation*} 
so that
\begin{equation}  \label{fEZq6:2} 
 \int  _{ \R^N  } |u|^{\alpha +2} \varphi _\lambda ^2 \ge  \lambda ^{ \frac {N\alpha } {2}}  f_\lambda (t)^{\alpha +2}  .
\end{equation} 
Moreover, we deduce from H\"older's inequality, \eqref{fEZq12b2:1}, \eqref{fEZq4b:1} and~\eqref{fEZq6:1}  that
\begin{equation}   \label{fEZq6:2b} 
 \Bigl| \Im \int  _{ \R^N  }  \overline{u} \varphi _\lambda \nabla u \cdot \nabla \varphi _\lambda  \Bigr| \le \nu\lambda K_t  f_\lambda (t).
\end{equation} 
Consequently, \eqref{fEZq4:1},  \eqref{fEZq6:2} and~\eqref{fEZq6:2b} yield
\begin{equation} \label{fEZq7:1} 
f_\lambda ' \ge - 2 \nu\lambda K_t -\Im \kappa  \lambda ^{\frac {N\alpha } {2}} f_\lambda ^{\alpha +1} \ge  - 2\nu \lambda K_T -\Im \kappa  \lambda ^{\frac {N\alpha } {2}} f_\lambda ^{\alpha +1},
\end{equation} 
for all $0<t\le T<\infty $, where we used the property that $K_t$ is nondecreasing in $t$ in the last inequality. 
Therefore, if 
\begin{equation}  \label{fEZq8:1} 
 f_\lambda (0) ^{\alpha +1} \ge  4 (-\Im \kappa )^{-1} \lambda ^{\frac {2-N\alpha } {2}}  \nu K_T,
\end{equation} 
it follows that $f_\lambda $ is increasing on $(0,T)$, and
\begin{equation} \label{fEZq10:1} 
f_\lambda ' \ge \frac {-\Im \kappa } {2} \lambda ^{\frac {N\alpha } {2}} f_\lambda ^{\alpha +1} 
\end{equation} 
on $(0,T)$. 
Equation~\eqref{fEZq10:1} implies that $f_\lambda $ must blow up before the finite time $\frac {2} {-\Im \kappa \alpha  }  \lambda ^{ - \frac {N\alpha } {2}}  f_\lambda (0) ^{-\alpha }$.
Therefore,
\begin{equation} \label{fEZq9:1} 
T \le \frac {2} {-\Im \kappa \alpha  }  \lambda ^{ - \frac {N\alpha } {2}}   f_\lambda (0) ^{-\alpha } .
\end{equation} 
Note that $f_ \lambda (0)$ is a nonincreasing function of $\lambda >0$ and
\begin{equation} \label{fEZq16:1} 
f_\lambda (0)=  \Bigl( \int  _{ \R^N  }  |u(0, x )|^2 \psi ^2(\lambda x)\, dx \Bigr)^{\frac {1} {2}} \goto
\begin{cases} 
0 &  \text{as }\lambda \uparrow \infty \\ 
 \| u(0, \cdot ) \| _{ L^2 } & \text{as } \lambda \downarrow 0 .
\end{cases} 
\end{equation} 
We first show that
\begin{equation} \label{fEZq17:1} 
K_T \goto _{ T\to \infty  }\infty .
\end{equation} 
Indeed, suppose by contradiction that $K_T$ is bounded as $T\to \infty $. It follows from~\eqref{fEZq16:1} that we can choose $ \widetilde{\lambda} >0$ sufficiently small so that~\eqref{fEZq8:1} with $\lambda = \widetilde{\lambda}$ holds for all $T>0$. We deduce from~\eqref{fEZq9:1}  that $T \le \frac {2} {-\Im \kappa \alpha  }  \widetilde{\lambda} ^{ - \frac {N\alpha } {2}}   f_{\widetilde{\lambda}} (0) ^{-\alpha }$ for all $T>0$. This is absurd, proving~\eqref{fEZq17:1}. 

We next prove~\eqref{fUZB2:1}. Fix $\lambda _0>0$ such that
\begin{equation}  \label{fEZq17b:1} 
f _{ \lambda _0 } (0) = \frac {1} {2}  \| u(0, \cdot ) \| _{ L^2 }.
\end{equation} 
Note that this is possible by~\eqref{fEZq16:1}. 
It follows from~\eqref{fEZq17:1} that if $T>0$ is sufficiently large, $\lambda =\lambda (T)$ defined by 
\begin{equation}  \label{fEZq18:1} 
 f _{ \lambda _0 } (0) ^{\alpha +1} = 4 (-\Im \kappa  )^{-1} \lambda (T) ^{\frac {2-N\alpha } {2}}  K_T,
\end{equation} 
satisfies
\begin{equation}  \label{fEZq19:1} 
\lambda (T)\le \lambda _0.
\end{equation} 
Since $f_\lambda (0)$ is a nonincreasing function of $\lambda $, we deduce from~\eqref{fEZq18:1}-\eqref{fEZq19:1} that~\eqref{fEZq8:1} holds with $\lambda =\lambda (T)$. Therefore, it follows from~\eqref{fEZq9:1} that
\begin{equation} \label{fEZq20:1} 
T \le \frac {2} {-\Im \kappa \alpha  }  \lambda (T) ^{ - \frac {N\alpha } {2}}   f_{\lambda (T)} (0) ^{-\alpha } .
\end{equation} 
Using again~\eqref{fEZq19:1}, we deduce from~\eqref{fEZq20:1} that
\begin{equation} \label{fEZq21:1} 
T \le \frac {2} {-\Im \kappa \alpha  }  \lambda (T) ^{ - \frac {N\alpha } {2}} f_ { \lambda _0 } (0) ^{-\alpha } .
\end{equation} 
Since~\eqref{fEZq18:1} implies
\begin{equation} \label{fEZq21:2} 
 \lambda (T) ^{- \frac {2-  N\alpha } {2}}   =  \frac {4} {-\Im \kappa } K_T f _{ \lambda _0 } (0) ^{- (\alpha +1)} 
\end{equation} 
formulas~\eqref{fEZq21:1} and~\eqref{fEZq21:2}  yield
\begin{equation} \label{fEZq22:1} 
\begin{split} 
T ^{\frac {2-N\alpha } {N\alpha }} & \le  \frac {4} {-\Im \kappa   } \Bigl(  \frac {2} {-\Im \kappa \alpha  } f _{ \lambda _0 } (0) ^{-\alpha } \Bigr)^{\frac {2-N\alpha } {N\alpha }}  f _{ \lambda _0 } (0) ^{- (\alpha +1 ) }    K_T \\ & = 2^{ \frac {2+N\alpha } {N\alpha }} (-\Im \kappa  )^{- \frac {2} {N\alpha }} \alpha ^{- \frac {2-N\alpha } {N\alpha }}    f _{ \lambda _0 } (0) ^{- \frac {N+2 } {N }}        K_T.
\end{split} 
\end{equation} 
Inequality~\eqref{fUZB2:1} now follows from~\eqref{fEZq22:1} and~\eqref{fEZq17b:1}.

We finally prove~\eqref{fEqUB1:1}. 
Given $T>0$, it follows from \eqref{fEZq16:1} that there exists a unique $\mu (T)>0 $ such that  
\begin{equation}  \label{fEqUB2:1} 
 f _{ \mu (T) } (0) ^{\alpha +1} =  4 (-\Im \kappa  )^{-1} \mu (T) ^{\frac {2-N\alpha } {2}}  K_T.
\end{equation} 
Since $ K_T$ is a  nondecreasing function of $T$, it follows from \eqref{fEqUB2:1} that the map $T\mapsto f _{ \mu (T) } (0) ^{\alpha +1} \mu (T) ^{-{\frac {2-N\alpha } {2}}} $ is also nondecreasing. 
On the other hand, $f_\mu (0)$ is a nonincreasing function of $\mu $, so that the map $\mu \mapsto f _{ \mu } (0) ^{\alpha +1} \mu ^{-{\frac {2-N\alpha } {2}}}$ is decreasing, so we conclude that the map $T\mapsto \mu(T)$ is nonincreasing.
Since $f_\mu (0) \le  \| u(0)\| _{ L^2 }$ for all $\mu >0$ by~\eqref{fEZq16:1} 
and $K_T \ge  \delta  \| u(0)\| _{ L^2 }^{\frac {N+2} {N}} T ^{\frac {2-N\alpha } {N\alpha }}$ by~\eqref{fUZB2:1} we deduce from~\eqref{fEqUB2:1} that
\begin{equation} \label{fEZ1} 
\mu (T) \le (4 (-\Im \kappa  )^{-1} \delta  )^{-\frac {2} {2-N\alpha }} \| u(0) \| _{ L^2 }^{-\frac {2} {N}} T^{-\frac {2} {N\alpha }} \goto _{ T\to \infty  }0.
\end{equation} 
We deduce in particular from~\eqref{fEZq16:1} and~\eqref{fEZ1} that
 $f _{ \mu (T) } (0) \to  \| u(0) \| _{ L^2 }$ as $T\to \infty $ so that by~\eqref{fEqUB2:1} 
\begin{equation} \label{fEqUB3:1} 
 \mu (T) ^{\frac {2-N\alpha } {2}}  K_T \goto _{ T\to \infty  }  \frac { \| u(0) \| _{ L^2 }^{\alpha +1} (-\Im \kappa ) } {4}.
\end{equation} 
Moreover, it follows from~\eqref{fEqUB2:1} that~\eqref{fEZq8:1} is satisfied with $\lambda =\mu (T)$, so that~\eqref{fEZq10:1} holds, i.e.
\begin{equation} \label{fEqUB4:1} 
f_{\mu (T)}' \ge \frac {-\Im \kappa  } {2} \mu (T) ^{\frac {N\alpha } {2}} f_{\mu (T)}^{\alpha +1} 
\end{equation} 
for all $0<t<T$.
Integrating the above differential inequality on $(T/2 ,T)$ and using \eqref{fEqUB3:1}, we obtain
\begin{equation}  \label{fEqUB5:1} 
f _{ \mu (T) }(T/2 )^{-\alpha }-f _{ \mu (T) }(T)^{-\alpha }\ge \frac {-\Im \kappa \alpha  } {4}   \mu (T) ^{\frac {N\alpha } {2}} T \ge \eta K_T^{-\frac {N\alpha } {2-N\alpha }} T
\end{equation} 
for $T\ge 2$, with $\eta >0$. 
Next, recall that $\mu (t)$ is a nonincreasing function of $t$,  and that the map $\lambda \mapsto f_\lambda (t) $ is a nonincreasing function of $\lambda $, so that 
\begin{equation} \label{fZZ1} 
f _{ \mu (s) } (\tau ) \le   f_{ \mu (t) } (\tau ) 
\end{equation} 
for all $\tau >0$ and $0<s<t$.
 Therefore, letting $s=\tau =T/2$ and $t=T$, we see that
  $  f _{ \mu (T/2) } (T/2)^{-\alpha } \ge   f_{ \mu (T) } (T/2) ^{-\alpha }$ and it follows from~\eqref{fEqUB5:1}  that
  \begin{equation}  \label{fEqUB5:1b1} 
f _{ \mu (T/2) }(T/2 )^{-\alpha }-f _{ \mu (T) }(T)^{-\alpha } \ge \eta K_T^{-\frac {N\alpha } {2-N\alpha }} T
\end{equation} 
 for $T\ge 2$. 
Next, we deduce from~\eqref{fZZ1} and the fact that the map $\tau \mapsto f _{ \mu (t) } (\tau )$ is increasing on $(0,t)$, that 
\begin{equation}  \label{fEqUB5:52} 
f _{ \mu (s) } (s ) \le   f_{ \mu (t) } (s )  \le   f_{ \mu (t) } (t ) 
\end{equation} 
for all $0<s<t$. Thus we see that the map $t\mapsto f_{ \mu (t) } (t ) $ is nondecreasing; and so the map $t\mapsto f_{ \mu (t) } (t ) ^{-\alpha }$ is nonincreasing, so it has a limit as $t\to \infty $. Letting $T\to \infty $ in~\eqref{fEqUB5:1b1}, we deduce that $K_T^{-\frac {N\alpha } {2-N\alpha }} T \to 0$ as $T\to \infty $, which is the desired result.
\end{proof} 

\begin{rem} 
Under the assumption
\begin{equation} \label{fEWG1:3} 
-\Im \kappa   \ge \frac {\alpha } {2\sqrt{\alpha +1}}  |\Re \kappa  |
\end{equation} 
we may replace $ \sup  _{ 0\le s\le t } \|\nabla u ( s ) \| _{ L^2 }$ by $ \| \nabla u (t)\| _{ L^2 }$ in estimates~\eqref{fUZB2:1} and~\eqref{fEqUB1:1} of Theorem~\ref{fUZB2}.
Indeed, it follows from~\eqref{fEWG1:3} that $  \| \nabla u(t) \| _{ L^2 } $ is a nondecreasing function of $t$. To see this, note that for a solution of~\eqref{MNLS} we have
\begin{equation} \label{fEWG1:1} 
\frac {1} {2} \frac {d} {dt}  \| \nabla u(t)\| _{ L^2 }^2 = \Re  \Bigl( i\kappa  \int  _{ \R^N  } \nabla ( |u|^\alpha u ) \cdot \nabla  \overline{u}  \Bigr) \Eqdef A. 
\end{equation} 
Since 
\begin{equation}  \label{fEq2b2} 
 \nabla ( |u|^{\alpha }u) =  \frac {\alpha +2} {2}   |u|^\alpha \nabla u + \frac {\alpha } {2}  |u|^{\alpha -2} u^2 \nabla  \overline{u}, 
\end{equation} 
we see that
\begin{equation}  \label{fEq2b3} 
 \nabla ( |u|^{\alpha }u)\cdot \nabla  \overline{u}  = \frac {\alpha +2} {2}   |u|^\alpha  |\nabla u|^2 + \frac {\alpha } {2}  |u|^{\alpha -2} u^2  (\nabla  \overline{u})^2 .
\end{equation} 
It follows that 
\begin{equation}  \label{fEq2b6} 
\begin{split} 
\Re [ i\kappa   \nabla ( |u|^{\alpha }u)\cdot \nabla  \overline{u} ] & =  -\Im \kappa    \frac {\alpha +2} {2}   |u|^\alpha  |\nabla u|^2+ \frac {\alpha } {2}  \Re [ i\kappa  |u|^{\alpha -2} u^2  (\nabla  \overline{u})^2 ] 
\\ & \ge  \Bigl(- \frac {\alpha +2} {2} \Im \kappa   - \frac {\alpha } {2} |\kappa | \Bigr)  |u|^\alpha  |\nabla u|^2.
\end{split} 
\end{equation} 
This shows that $A\ge 0$ provided~\eqref{fEWG1:3} holds. 
The above calculations are justified if $u$ is an $H^2$ solution. The result follows by approximation, regularity, and continuous dependence. (All these properties are established in~\cite{Kato1}.)
Note that~\eqref{fEWG1:3}  is identical to condition~(2.2) in~\cite{OkazawaY}.
\end{rem} 

\begin{rem} \label{fUB4} 
Under the assumptions of Theorem~\ref{fUZB2}, we do not know whether or not there exists a global $H^1$ solution of~\eqref{MNLS}. In fact, if  such a solution does exist, it would necessarily have a stronger dispersion than the solutions of the linear Schr\"o\-din\-ger equation. Indeed, suppose  
$u\not \equiv  0$ is a global $H^1$ solution of~\eqref{MNLS}
 and let $R(t)$ satisfy
\begin{equation*} 
\int  _{  \{|x|\le R(t) \} }  |u(t,x)|^2 dx = \frac {1} {2}\int  _{ \R^N  }  |u(t,x)|^2 dx.
\end{equation*} 
We claim that
\begin{equation} \label{fUB4:2} 
\limsup  _{ t\to \infty  } t^{-\frac {2} {N\alpha }} R(t) = \infty .
\end{equation} 
To see this, observe that by H\"older's inequality and the definition of $R(t)$,
\begin{equation} \label{fUB4:3} 
\int  _{ \R^N  }  |u|^{\alpha +2} \ge  2^{-\frac{\alpha +2}{2}} \omega _N^{- \frac {\alpha } {2}} R^{ - \frac {N\alpha } {2} } f(t)^{\frac {\alpha +2} {2}}
\end{equation} 
where $f(t)= \int  _{ \R^N  } |u|^2$ and $\omega _N$ is the measure of the unit ball of $\R^N $. It follows from~\eqref{MNLS}  and~\eqref{fUB4:3} that 
\begin{equation*} 
f' \ge - \Im \kappa  2^{-\frac{\alpha}{2}} \omega _N^{- \frac {\alpha } {2}} R^{ - \frac {N\alpha } {2} } f^{\frac {\alpha +2} {2}}.
\end{equation*} 
Therefore, 
$\int _0^\infty R(t)^{-\frac {N\alpha } {2}}  dt < \infty $, 
which yields~\eqref{fUB4:2}.  
On the other hand, let $ \widetilde{u} (t)= e^{it\Delta } \DI$ where $\DI \in H^1 (\R^N ) $, $\DI \not = 0$.  Multiplying the equation $i \widetilde{u} _t +\Delta  \widetilde{u} =0$ by $\psi _M  \overline{u} $, where $\psi _{M} (x)=  \min \{ \frac {x} {M}, 1 \}$, we obtain
\begin{equation} \label{fUB6:5} 
\int  _{ \R^N  } \psi _M  | \widetilde{u} |^2 \le \int  _{ \R^N  } \psi _M  | \DI |^2 + \frac {2t} {M} \| \DI\| _{ L^2 } \|\nabla \DI\| _{ L^2 }.
\end{equation} 
(Cf.~\cite[Lemma~5.4]{GinibreV}).)
Given $t>0$, we substitute $M= at$ in~\eqref{fUB6:5} with $a= 16\frac {  \| \nabla \DI \| _{ L^2 }} { \| \DI \| _{ L^2 }}$.
Since $\psi _M \ge 1 _{ \{  |x|>M \} }$, this yields
\begin{equation*} 
\int  _{ \{  |x|>at \}  }  | \widetilde{u} |^2 \le \int  _{ \R^N  } \psi _{at}  | \DI |^2 + \frac {1} {8} \| \DI\| _{ L^2 }^2.
\end{equation*} 
Furthermore, $\int  _{ \R^N  } \psi _{at}  | \DI |^2 \to 0$ as $t\to \infty $ by dominated convergence, and so
\begin{equation*} 
\int  _{ \{  |x|>at \}  }  | \widetilde{u} |^2 \le  \frac {1} {4} \| \DI\| _{ L^2 }^2
\end{equation*} 
for $t$ large. Therefore,
\begin{equation} \label{fUB6:7} 
\int  _{ \{  | \widetilde{x} |<at \}  }  |u|^2 \ge  \frac {3} {4}  \| \DI\| _{ L^2 }^2
\end{equation} 
for $t$ large. In particular, if $ \widetilde{R} (t)$ satisfies 
\begin{equation*} 
\int  _{  \{|x|\le  \widetilde{R} (t) \} }  | \widetilde{u} (t,x)|^2 dx = \frac {1} {2}\int  _{ \R^N  }  | \widetilde{u} (t,x)|^2 dx = \frac {1} {2}  \| \DI \| _{ L^2 }^2,
\end{equation*} 
then $ \widetilde{R} (t) \le at$ for $t$ large. Comparing with~\eqref{fUB4:2}, we see that $u$ has a stronger dispersion than $ \widetilde{u} $ as $t\to \infty $.
\end{rem} 

\begin{rem} 
If we look for solutions of~\eqref{MNLS}  of the form 
\begin{equation*} 
u(t,x)= \rho (t) e^{i \frac { |x|^2} {4(t+ t_0)}},
\end{equation*} 
where $t_0>0$ is given, then $\rho $ must satisfy
\begin{equation*} 
\rho '= -\frac {N} {2(t+t _0)} \rho -i\kappa   |\rho| ^{\alpha } \rho .
\end{equation*} 
Setting $z= (t+t_0)^{\frac {N} {2} } \rho $, we get to the equation
\begin{equation*} 
z'=-i\kappa  (t+t_0)^{- \frac {N\alpha } {2}}  |z|^{\alpha }z.
\end{equation*} 
Multiplying the equation by $ \overline{z} $ and taking the real part, one easily gets to
\begin{equation} \label{feqe1}  
\frac {1} {\alpha  |z(t)|^\alpha }= \frac {1} {\alpha  |z(0)|^\alpha } +\Im \kappa  \int _0^t \frac {ds} {(s+t_0)^{\frac {N\alpha } {2}}}.
\end{equation} 
If $\alpha >\frac {2} {N}$, then the integral on the right-hand side of~\eqref{feqe1}  is convergent, and we see that if $ |z(0)|$ is sufficiently small so that 
\begin{equation*} 
 \frac {1} {\alpha  |z(0)|^\alpha } \ge -\Im \kappa   \int _0^\infty  \frac {ds} {(s+t_0)^{\frac {N\alpha } {2}}},
\end{equation*} 
then the solution is global; and if $ |z(0)|$ is larger, then the solution blows up in finite time. On the other hand, if $\alpha \le \frac {2} {N}$, then the integral on the right-hand side of~\eqref{feqe1}  is divergent. Therefore, for every $z(0)$, the solution blows up at the finite time $T$ given by
\begin{equation*} 
 \frac {1} {\alpha  |z(0)|^\alpha } = -\Im \kappa  \int _0^T  \frac {ds} {(s+t_0)^{\frac {N\alpha } {2}}}.
\end{equation*} 
Thus we see that the exponent $\alpha =\frac {2} {N}$ is critical.
\end{rem} 

\section{Low energy scattering} \label{SmallData} 

To prove Theorem~\ref{eGE1}, we first prove a local existence result for small  data for the following equation\begin{equation} \label{MNLS2} 
v(t) = e^{it\Delta }\DIb +  \int _0^t h(s) e^{i(t-s) \Delta }  ( |v|^\alpha v)(s)\, ds
\end{equation} 
where
\begin{equation} \label{fLB3} 
h(t)=  i\kappa  (1-t)^{- \frac {4 - N\alpha } {2}}  
\end{equation} 
for $0\le t<1$.
As we will see, equation~\eqref{MNLS2} is equivalent to equation~\eqref{MNLS} via the pseudo-conformal transformation.
Before stating the result, we introduce some notation. We assume~\eqref{eGE1:1} and we  set 
\begin{equation} \label{fLB8} 
\rho = \frac {N(\alpha +2)} {N+\alpha },\quad \gamma = \frac {4(\alpha +2)} {\alpha (N-2)}
=  \frac {4\rho } {N(\rho -2)} .
\end{equation} 
It is not difficult to show that
\begin{equation}  \label{fLB8b3}
2< \rho < \frac {2N} {N-2}, \qquad N> \rho 
\end{equation} 
and that $(\gamma ,\rho )$ is an admissible pair, i.e. $\frac {2} {\gamma }= N(\frac {1} {2}- \frac {1} {\rho })$ (see~\cite[Proposition~1.5~(ii)]{CazenaveW2}).
Note also that $\alpha >\alpha _2$, which implies $N\alpha ^2 +N\alpha > 4$, so that
\begin{equation} \label{fLB16b2}
\frac  {4- (N-4) \alpha } {2(\alpha +2)} > \frac  {4-N\alpha } {2}
\end{equation} 

\begin{thm} \label{eGE3} 
Suppose $N\ge 3$,  $\kappa \in \C$ and $\alpha _2 < \alpha  < \frac {4} {N}$ where $\alpha _2$ is given by~\eqref{fAl2}. Fix 
\begin{equation}  \label{fDfa2} 
a \ge \gamma 
\end{equation} 
 sufficiently large so that 
\begin{equation} \label{fDfa} 
\frac {4- (N-4) \alpha } {2(\alpha +2)}  - \frac {\alpha } {a} > \frac {4-N\alpha } {2} .
\end{equation} 
(The existence of $a$ is guaranteed by~\eqref{fLB16b2}.) 
There exists $\delta >0$ such that if 
\begin{gather} 
\DIb \in X \label{fFP14} \\
(1+  |\cdot |) e^{it \Delta } \DIb \in  L^a ((0,1), L^\rho ( \R^N ))  \label{fFP15} \\
e^{it \Delta }\nabla  \DIb \in  L^a ((0,1), L^\rho ( \R^N )) \label{fFP16} \\
\| \nabla  e^{it \Delta } \DIb \| _{ L^a ((0,1), L^\rho ) } \le \delta \label{fFP17}
\end{gather} 
 then there exists a solution $v\in C([0,1], X)$ of~\eqref{MNLS2}.
\end{thm}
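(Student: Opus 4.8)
The plan is to adapt the low energy scattering argument of~\cite{CazenaveW1} to equation~\eqref{MNLS2}, the new ingredient being the use of Strichartz estimates with non-admissible pairs (from~\cite{CazenaveW1, Kato2, Foschi, Vilela}) to absorb the time weight $h$. Since the time interval $(0,1)$ is bounded, it is convenient to work in a space where several norms come for free; set $\Phi(v)(t)= e^{it\Delta}\DIb + \int_0^t h(s)\, e^{i(t-s)\Delta}(|v|^\alpha v)(s)\,ds$ and look for a fixed point of $\Phi$ in a closed ball $E$ of
\[
\bigl\{\, v\in L^\infty((0,1),X)\ :\ \nabla v,\ (1+|\cdot|)v\in L^a((0,1),L^\rho)\,\bigr\},
\]
the radii in the various norms being chosen in terms of the (finite, by~\eqref{fFP14}--\eqref{fFP16}) norms of the free evolution, with the radius of the $\|\nabla v\|_{L^a((0,1),L^\rho)}$-ball taken proportional to $\delta$. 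Equip $E$ with the metric $d(v,w)=\|v-w\|_{L^a((0,1),L^\rho)}$, for which bounded subsets of the stronger norms are closed, so that $(E,d)$ is complete (as in~\cite{CazenaveW1}). Note that $v\in L^a_tL^\rho_x$ is in fact already implied by $v\in L^\infty_tX$, since $\rho<\frac{2N}{N-2}$ and $(0,1)$ has finite length, and that the pair $(a,\rho)$ is non-admissible whenever $a>\gamma$.

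First I would record the linear estimates used: the standard homogeneous and inhomogeneous Strichartz estimates for the admissible pairs $(\infty,2)$ and $(\gamma,\rho)$, together with the inhomogeneous estimates $\bigl\|\int_0^t e^{i(t-s)\Delta}F(s)\,ds\bigr\|_{L^a((0,1),L^\rho)}\lesssim\|F\|_{L^{q'}((0,1),L^{r'})}$ valid for a suitable non-admissible pair $(q,r)$ paired with $(a,\rho)$, in the spirit of~\cite{Kato2, Foschi, Vilela}. Applying these to $\Phi(v)$ — with the derivative $\nabla$ and, after using $x\,e^{i\tau\Delta}=e^{i\tau\Delta}(x-2i\tau\nabla)$, the weight $x$ commuted through $e^{i(t-s)\Delta}$ at the cost of the bounded factor $0\le t-s\le1$ — the matter reduces to bounding, pointwise in $t$ and in the appropriate dual Lebesgue norm, the nonlinear quantities $|v|^\alpha v$, $|v|^\alpha\nabla v$ and $|v|^\alpha\,xv$, and then integrating them against $h$ in time. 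Each of these is controlled by Hölder's inequality: in space one distributes the $\alpha$ surplus powers of $v$ between the Sobolev bound $H^1\hookrightarrow L^{2N/(N-2)}$ and the Strichartz norms retained in $E$ (and, where useful, trades a power of $v$ for $\nabla v$ via $\dot W^{1,\rho}\hookrightarrow L^{N\rho/(N-\rho)}$, licit since $N>\rho$); in time one factors out $\|h\|_{L^b(0,1)}$, the admissible value of $b$ being dictated by how the time integrability is distributed among the factors.

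The decisive point is that these choices can be made so that $b\,\frac{4-N\alpha}{2}<1$, i.e.\ $h\in L^b(0,1)$: this is exactly what condition~\eqref{fDfa} on $a$ secures — the term $-\frac\alpha a$ in~\eqref{fDfa} encodes the cost of placing one batch of $\alpha$ copies of $v$ in $L^a((0,1),L^\rho)$ rather than in $L^\infty_tH^1$ — and such an $a\ge\gamma$ exists precisely because $\alpha>\alpha_2$, i.e.\ because of~\eqref{fLB16b2}. Granting the resulting nonlinear estimates, one checks, for $\delta$ small, that $\Phi$ maps $E$ into itself and is a strict contraction for the metric $d$: in each norm the Duhamel term is bounded by $\|h\|_{L^b}$ times a product of $E$-norms of $v$ of total degree $1+\alpha$, and since at least one factor is of the kind controlled by $\delta$ (or, in the purely $L^2$-based estimates, since the contribution is made small by choosing $\delta$ small and splitting so as to bring in $\|\nabla v\|_{L^a_tL^\rho_x}$), these contributions are absorbed; the contraction estimate follows identically using $\bigl||v|^\alpha v-|w|^\alpha w\bigr|\le C(|v|^\alpha+|w|^\alpha)|v-w|$ and the analogous inequalities for the gradient and weighted differences, which produce a Lipschitz constant $\le C\delta^\alpha<1$. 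Banach's fixed point theorem then gives $v\in E$ solving~\eqref{MNLS2}, and $v\in C([0,1],X)$ — including up to $t=1$, since $h\in L^1(0,1)$ (note $\alpha_2>\frac2N$ for $N\ge3$) and the Strichartz bounds are uniform on $[0,1]$ — follows from the continuity in $X$ of each term defining $\Phi(v)$, once more via $x\,e^{it\Delta}=e^{it\Delta}(x-2it\nabla)$.

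The main obstacle is the exponent bookkeeping in the nonlinear estimates: one must choose the non-admissible Strichartz pair and the Hölder splitting of the $\alpha$ extra powers of $v$ so that simultaneously (i) the spatial Lebesgue exponents close up, (ii) the resulting time exponent $b$ of $h$ satisfies $b\,\frac{4-N\alpha}{2}<1$, and (iii) the chosen pair lies within the hypotheses of the non-admissible Strichartz estimates of~\cite{Foschi, Vilela, Kato2}. Reconciling (i) and (ii) is exactly what~\eqref{fDfa} makes possible (and its availability is where $\alpha>\alpha_2$, i.e.~\eqref{fLB16b2}, enters), and verifying (iii) for the pair actually used is the other delicate verification; the rest is a routine perturbation of the contraction-mapping argument of~\cite{CazenaveW1}.
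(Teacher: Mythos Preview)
Your proposal is correct and follows essentially the same route as the paper: a contraction-mapping argument for $\Phi$ using the non-admissible Strichartz estimate~\eqref{fLB5}, the Sobolev embedding $\dot W^{1,\rho}\hookrightarrow L^{N\rho/(N-\rho)}$ (which, thanks to the specific choice of $\rho$ in~\eqref{fLB8}, gives exactly $\|\,|v|^\alpha w\|_{L^{\rho'}}\le C\|\nabla v\|_{L^\rho}^\alpha\|w\|_{L^\rho}$), the commutator $x e^{i\tau\Delta}=e^{i\tau\Delta}(x-2i\tau\nabla)$ for the weight, and the metric $\|v-w\|_{L^a_tL^\rho_x}$. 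The only cosmetic difference is that the paper does \emph{not} carry $L^\infty_tX$ through the fixed-point argument---its set $\Ens_{\delta,M}$ involves only $L^a_tL^\rho_x$-type norms---and recovers $v\in C([0,1],X)$ afterwards by observing that $a\ge\gamma$ forces $\widetilde a'\ge\gamma'$, so the already-established bounds place $h\,|v|^\alpha v$ (and its gradient and weighted versions) in the dual admissible space $L^{\gamma'}_tL^{\rho'}_x$, whence standard Strichartz gives continuity in $X$; this is a cleaner route to $C([0,1],X)$ than the $h\in L^1$ argument you sketch.
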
 

\begin{proof}  [Proof of Theorem~$\ref{eGE3}$]
We define $ \widetilde{a} $ by
\begin{equation} \label{fDfat} 
\frac {1} {a}+ \frac {1} { \widetilde{a} }=\frac {2} {\gamma }
\end{equation} 
and we recall the following Strichartz-type estimate for non-admissible pairs 
\begin{equation} \label{fLB5} 
 \Bigl\| \int _0^\cdot e^{i(\cdot -s)\Delta } f(s)\, ds \Bigr\| _{ L^a((0,1), L^\rho ) } \le C  \| f \| _{ L^{ \widetilde{a}' }((0,1), L^{\rho '}) } .
\end{equation}  
(See~\cite[Lemma~2.1]{CazenaveW1}.)
If $\mu $ is defined by
\begin{equation} \label{fDfmu} 
\frac {1} {\mu }= \frac {4- (N-4) \alpha } {2(\alpha +2)}  - \frac {\alpha } {a} < \frac {4- (N-4) \alpha } {2(\alpha +2)}<1
\end{equation} 
then it follows from~\eqref{fDfa} and~\eqref{fLB3} that
\begin{equation} \label{fLB7} 
h\in  L^\mu   (0,1 ) .
\end{equation} 
Next, we deduce from Sobolev's inequality that
\begin{equation} \label{fLB9} 
 \| v \| _{ L^{\frac {\alpha \rho } {\rho -2}} } =  \| v \| _{ L^{\frac {N \rho } {N- \rho }} } \le C  \| \nabla v \| _{ L^\rho  }
\end{equation} 
and so by H\"older's inequality,
\begin{equation} \label{fLB10} 
 \| \,  |v|^\alpha w \| _{ L^{\rho '} }\le  \|v\| ^\alpha _{ L^{\frac {\alpha \rho } {\rho -2}} }  \| w \| _{ L^\rho  }\le C  \|\nabla v\| _{ L^\rho  }^\alpha  \|w\| _{ L^\rho  }. 
\end{equation} 
It easily follows from~\eqref{fLB10} that
\begin{gather}  
 \| \,  |v|^\alpha v\| _{ L^{ \rho '} } \le C \| \nabla v\| _{ L^\rho }^\alpha   \|v\| _{ L^\rho }   \label{fLB11b1} \\
  \| \, |\cdot |\,   |v|^\alpha v\| _{ L^{ \rho '} } \le C \| \nabla v\| _{ L^\rho }^\alpha   \| \,  |\cdot |v\| _{ L^\rho }   \label{fLB11b2} \\
  \| \nabla  ( |v|^\alpha v ) \| _{ L^ {\rho '} } \le C \| \nabla v\| _{ L^\rho }^{\alpha +1}   \label{fLB11} \\
  \| \,  |u|^\alpha u -  |v|^\alpha v\| _{ L^{ \rho '} }\le  C ( \| \nabla u\| _{ L^\rho  }^\alpha +  \| \nabla v \| _{ L^\rho  }^\alpha)  \| u-v \| _{ L^\rho }. 
  \label{fLB12}
\end{gather} 
We observe, by~\eqref{fDfmu}  and~\eqref{fDfat}, that
\begin{equation} \label{fLB11b3} 
\frac {1} { \widetilde{a} '}= \frac {1} {\mu } + \frac {\alpha +1} {a}.
\end{equation} 
Again using H\"older's inequality, but with the time integrals, we deduce from~\eqref{fLB11b3},  
along with respectively~\eqref{fLB11b1},  \eqref{fLB11b2}, \eqref{fLB11} and~\eqref{fLB12},  that
\begin{gather} 
 \| h   |v|^\alpha v \|  _{ L^{ \widetilde{a}' }( (0,1) , L^{ \rho '}) } \le   C  \|h  \| _{ L^\mu  (0,1)}   \| \nabla  v \| _{ L^a( (0,1) , L^\rho  )}^{\alpha }  \|   v \| _{ L^a( (0,1) , L^\rho )}   \label{fLB13b1} \\
  \| h  |\cdot |\,   |v|^\alpha v \|  _{ L^{ \widetilde{a}' }( (0,1) , L^{ \rho '}) } \le   C  \|h  \| _{ L^\mu (0,1)}   \| \nabla  v \| _{ L^a( (0,1) , L^\rho  )}^{\alpha }  \| \, |\cdot |\,   v \| _{ L^a( (0,1) , L^\rho )}   \label{fLB13b2} \\
  \| \nabla (h   |v|^\alpha v ) \|  _{ L^{ \widetilde{a}' }( (0,1) , L^{ \rho '}) } \le   C  \|h  \| _{ L^\mu (0,1)}   \| \nabla  v \| _{ L^a( (0,1) , L^\rho )}^{\alpha +1}    \label{fLB13}
\end{gather} 
and
\begin{multline}  \label{fLB15}
 \| h  ( |u|^\alpha u -  |v|^\alpha v) \|  _{ L^{ \widetilde{a}' }((0,1) , L^{\rho '}) } \le C 
  \|h  \| _{ L^\mu (0,1)} \\ \times  (   \| \nabla u \| _{ L^a( (0,1) , L^\rho )}^{\alpha }+   \| \nabla v \| _{ L^a( (0,1) , L^\rho )}^{\alpha })
    \|u -v \| _{ L^a( (0,1) , L^{\rho } )} .
\end{multline} 
We construct the solution $v$ of~\eqref{MNLS3} by a contraction mapping argument in the set
$\Ens _{ \delta ,M }$  defined for  $\delta ,M>0$ by
\begin{equation} \label{fFP1} 
\begin{split} 
\Ens &_{ \delta ,M } = \{ v\in L^a((0,1), W^{1, \rho } (\R^N )) ;  \,  |\cdot |v\in L^a((0,1), L^{\rho } (\R^N )) ,\\ &
  \|  v \| _{ L^a ((0,1), L^\rho ) }\le M,  \|  |\cdot | v \| _{ L^a ((0,1), L^\rho ) }\le M \text{ and }  \| \nabla v \| _{ L^a ((0,1), L^\rho ) }\le \delta   \}.
\end{split} 
\end{equation} 
We set $\dist (v,w) =  \|v-w\| _{ L^a (0,1), L^\rho ) }$ so that $ ( \Ens _{ \delta ,M }, d)$ is a complete metric space. 
Fix $\DIb\in X $ and, given $v\in \Ens _{ \delta ,M }$, let  $\Ical (v)$ and $\Phi (v)$ be defined by
\begin{gather}
\Ical (v)(t)=  \int _0^t h(s) e^{i(t-s) \Delta }  ( |v|^\alpha v)(s)\,ds \label{fFP2} \\
\Phi ( v )(t)= e^{it\Delta } \DIb + \Ical ( v ) (t) . \label{fFP3} 
\end{gather} 
It follows from~\eqref{fLB13b1}, \eqref{fLB13}, \eqref{fLB15} and the estimate~\eqref{fLB5} that, for some constant $C$ independent of $\delta $, $M$ and $v,w\in \Ens _{ \delta ,M }$,
\begin{gather} 
 \| \Ical (v) \| _{ L^a((0,1), L^\rho ) } \le C  \|h\| _{ L^\mu (0,1) } \delta ^\alpha M  \label{fFP4} \\
  \|\nabla  \Ical (v) \| _{ L^a((0,1), L^\rho ) } \le C  \|h\| _{ L^\mu (0,1) } \delta ^{\alpha +1} \label{fFP5}   \\   \| \Ical (v) - \Ical (w) \| _{ L^a((0,1), L^\rho ) } \le  C  \|h\| _{ L^\mu (0,1) } \delta ^\alpha  \dist( v, w) .\label{fFP6} 
\end{gather} 
Next, we estimate the weighted norm. We observe that 
\begin{equation} \label{fFP7} 
x e^{i\tau  \Delta } = e^{i \tau \Delta } (x-2i \tau \nabla )
\end{equation} 
for all $ \tau \in \R$. Therefore
\begin{equation} \label{fFP8} 
\begin{split} 
x \Ical (v) (t)  =  &  \int _0^t h(s) e^{i (t-s) \Delta } (x-2i (t-s) \nabla )  |v|^\alpha v\\
= & \int _0^t h(s) e^{i (t-s) \Delta } x |v|^\alpha v - 2i  \int _0^t h(s)(t-s) e^{i (t-s) \Delta } \nabla ( |v|^\alpha v) 
\end{split} 
\end{equation} 
and we deduce from~\eqref{fLB13b2}, \eqref{fLB13} and~\eqref{fLB5} that
\begin{equation} \label{fFP9} 
 \| \,  |\cdot | \Ical (v) \| _{ L^a((0,1), L^\rho ) } \le C  \|h\| _{ L^\mu (0,1) } \delta ^\alpha M .
\end{equation} 
We now set
\begin{gather} 
M= 2 \max \{   \|  e^{it \Delta } \DIb \| _{ L^a ((0,1), L^\rho ) },  \|  |\cdot |  e^{it \Delta } \DIb  \| _{ L^a ((0,1), L^\rho ) }\} \label{fFP10}  \\
\delta = 2 \| \nabla v_0 \| _{ L^a ((0,1), L^\rho ) } . \label{fFP11} 
\end{gather} 
It follows from~\eqref{fFP4}, \eqref{fFP9}  and~\eqref{fFP5} that if $\delta $ is sufficiently small, then
\begin{equation*} 
 \| \Ical (v) \| _{ L^a((0,1), L^\rho ) } \le \frac {M} {2}, \quad  \| \,  |\cdot | \Ical (v) \| _{ L^a((0,1), L^\rho ) } \le \frac {M} {2}, \quad \|\nabla  \Ical (v) \| _{ L^a((0,1), L^\rho ) } \le \frac {\delta } {2}.
\end{equation*} 
Applying~\eqref{fFP10}-\eqref{fFP11}  and~\eqref{fFP2}-\eqref{fFP3}, we deduce that $\Phi : \Ens _{ \delta ,M } \to \Ens _{ \delta ,M }$. Moreover, assuming $\delta $ possibly smaller, it follows from~\eqref{fFP6} that $\Phi $ is a strict contraction on $ \Ens _{ \delta ,M }$. By Banach's fixed point theorem, $\Phi $ has a fixed point $v\in \Ens _{ \delta ,M }$, which is a solution of~\eqref{MNLS}.  

To complete the proof, it remains to show that $v\in C([0,1], X)$.
For this, we observe that by~\eqref{fDfa2} and~\eqref{fDfat} we have $ \widetilde{a} \le \gamma $, so that $ \widetilde{a} ' \ge \gamma ' $.
Therefore, estimates~\eqref{fLB13b1}, \eqref{fLB13b2} and~\eqref{fLB13}, and the fact that $v\in \Ens _{ \delta ,M }$ imply that
\begin{gather*} 
 \| h   |v|^\alpha v \|  _{ L^{ \gamma ' }( (0,1) , L^{ \rho '}) } \le   C \delta ^{\alpha }  M   \\
  \| h  |\cdot |\,   |v|^\alpha v \|  _{ L^{ \gamma ' }( (0,1) , L^{ \rho '}) } \le   C  \delta ^{\alpha }  M  \\
  \| \nabla (h   |v|^\alpha v ) \|  _{ L^{ \gamma ' }( (0,1) , L^{ \rho '}) } \le   C \delta ^{\alpha +1}  
\end{gather*} 
It now follows from the standard Strichartz estimates (i.e., with admissible pairs, see e.g.~\cite[Theorem~2.2.3~(ii)]{CLN10}) that $\Ical (v) \in C ([0, 1], X) $.
Since $v= \Ical (v) + e^{it \Delta } \DIb $ and $\DIb \in X$, 
this completes the proof. 
\end{proof} 

\begin{rem} \label{eGE2} 
The conditions~\eqref{fFP14}--\eqref{fFP17} are satisfied under some stronger, but more familiar, conditions. Indeed, set
\begin{equation} \label{fDfs} 
s = N \Bigl( \frac {1} {2} - \frac {1} {\rho } \Bigr) - \frac {2} {a}
\end{equation} 
so that $0 \le s< 1$ by~\eqref{fDfa2} and~\eqref{fLB8b3}. 
Setting $\frac {1} { \widetilde{\rho } } = \frac {1} {\rho } + \frac {s} {N}$, we see that  $(a, \widetilde{\rho } )$ is an admissible pair, so that by Strichartz's estimates
(see e.g.~\cite[Theorem~2.2~(i)]{CazenaveW2})
$ \|  e^{it\Delta } \varphi  \|_{ L^a (\R, \dot H^{s,  \widetilde{\rho } } ) } \le C  \| \varphi  \| _{ \dot H^s } $,
where $\dot H^{s, p}$ and $\dot H^s= \dot H^{s,2}$ are the homogeneous Sobolev spaces (see e.g.~\cite[Section~6.3]{BerghL}).
Using Sobolev's embedding, we deduce that
\begin{equation*} 
 \|  e^{it\Delta }\varphi  \|_{ L^a (\R, L^\rho ) } \le C  \| \varphi  \| _{ \dot H^s } .
\end{equation*} 
Therefore, conditions~\eqref{fFP14}--\eqref{fFP17}  are satisfied provided $\DIb \in X$, $\DIb \in \dot H^s (\R^N ) $, $ |\cdot |\DIb \in \dot H^s (\R^N ) $, $\nabla \DIb \in \dot H^s (\R^N ) $, and the smallness condition is on the norm $ \| \nabla\DIb \| _{ \dot H^s } $. 
\end{rem} 

\begin{proof} [Proof of Theorem~$\ref{eGE1}$]
Let $\DI$ be as in Theorem~\ref{eGE1}. In particular, $\DIb$ defined by $\DIb (x)= e^{i \frac { |x|^2} {4}} \DI (x)$ satisfies $\DIb\in X$. Moreover, if $s$ is defined by~\eqref{fDfs}, then $s\in [0,1)$, so that $\DIb \in \dot H^s (\R^N ) $, $ |\cdot |\DIb \in \dot H^s (\R^N ) $, $\nabla \DIb \in \dot H^s (\R^N ) $. Therefore, it follows from Remark~\ref{eGE2} that $ e^{it \Delta } \DIb \in  L^a ((0,1), L^\rho ( \R^N ))$, $ |\cdot | e^{it \Delta } \DIb \in  L^a ((0,1), L^\rho ( \R^N ))$, $ e^{it \Delta } \nabla \DIb \in  L^a ((0,1), L^\rho ( \R^N ))$ and $  \|e^{it \Delta } \nabla  \DIb\|_{  L^a ((0,1), L^\rho )} \le  \| \DIb \| _{ H^2 }$. Thus we see that if $\| \DIb \| _{ H^2 }$ is sufficiently small, then $\DIb $ satisfies the assumptions of Theorem~\ref{eGE3}. Let $v\in C([0,1], X)$ be the corresponding solution of~\eqref{MNLS2}. 
Following~\cite{CazenaveW1}, we apply the pseudo-conformal transformation.
More precisely, let the variables $(s,y)\in [0,\infty ) \times \R^N $ be defined by 
\begin{equation*} 
s={\frac {t} {1-t}},\quad y=\frac {x} {1-t}
\end{equation*} 
for $(t,x)\in [0,1) \times \R^N $.
We define $u$ on $[0,\infty ) \times \R^N $ by
\begin{equation*} 
u(s, y) = (1-t)^{N/2} e^{i{\frac {|x|^2} {4(1-t)}}} v(t,x) .
\end{equation*} 
It follows that $u\in C([0,\infty ), X )$, and is a solution of~\eqref{MNLS3} on $[0,\infty )$.
Finally, since $v \in C ([0, 1], X) $, it follows from Proposition~3.14 in~\cite{CazenaveW1}  that  there exists $u^+ \in X$ such that $e^{-is \Delta } u(s) \to u^+ $ in $X$  as $s\to \infty $. This completes the proof.
\end{proof} 

\begin{rem} 
The conditions on the initial value $\DI$ in Theorem~\ref{eGE1} are stronger than the conditions that are actually used in the proof. These latter conditions are expressed in terms of $\DIb = e^{i \frac { |x|^2} {4}} \DI $ in formulas~\eqref{fFP14}--\eqref{fFP17}. 
Intermediate conditions are given in Remark~\ref{eGE2}.
\end{rem}


\begin{thebibliography}{99}

\bibitem{BerghL}{Bergh J. and L\"ofstr\"om J.} {{\it Interpolation
spaces. An introduction.} Grundlehren der Mathematischen Wissenschaften  {\bf 223} . Springer-Verlag, Berlin-New York, 1976.}
\MScN{MR0482275}  \DOI{10.1007/978-3-642-66451-9}

\bibitem{CLN10}{Cazenave T.} {{\it Semilinear Schr\"o\-din\-ger 
equations}, Courant Lecture Notes in Mathematics, {\bf 10}. New York
University, Courant Institute of Mathematical Sciences, New York; American
Mathematical Society, Providence, RI, 2003.}
\MScN{MR2002047}

\bibitem{CazenaveW2}{Cazenave T. and Weissler F. B.} {The
Cauchy problem for the critical nonlinear Schr\"o\-din\-ger equation in
$H^{s}$, Nonlinear Anal.  {\bf 14} (1990), no.~10, 807--836.}
{\MScN{MR1055532}} {\DOI{10.1016/0362-546X(90)90023-A}}

\bibitem{CazenaveW1}{Cazenave T. and Weissler F. B.} {Rapidly decaying solutions of
the nonlinear Schr\"o\-din\-ger equation, Comm. Math. Phys.  {\bf 147} (1992), 75--100.}
\MScN{MR1171761} \LINK{http://projecteuclid.org/euclid.cmp/1104250527}

\bibitem{DengL}{Deng K. and Levine H.A.} {The role of critical exponents in blow-up
theorems: the sequel, J. Math. Anal. Appl.  {\bf 243}  (2000),  no. 1, 85--126.}
\MScN{MR1742850} \DOI{10.1006/jmaa.1999.6663}

\bibitem{Foschi}{Foschi D.} {Inhomogeneous Strichartz estimates,
J. Hyperbolic Differ. Equ. {\bf 2} (2005), no.~1, 1--24.}
\MScN{MR2134950} \DOI{10.1142/S0219891605000361}

\bibitem{Fujita}{Fujita H.} {On the blowing-up of solutions of the Cauchy
problem for $u_{t}=\triangle u+u^{\alpha +1}$, J. Fac. Sci. Univ. Tokyo Sect.
IA Math. {\bf 13} (1966), 109--124.}
\MScN{MR0214914} \LINK{http://repository.dl.itc.u-tokyo.ac.jp/dspace/bitstream/2261/6061/1/jfs130201.pdf}

\bibitem{GinibreOV}{Ginibre J., Ozawa T. and Velo G.} {On the existence of the wave
operators for a class of nonlinear Schr\"o\-din\-ger equations, 
Ann. Inst. H.~Poin\-ca\-r\'e Phys. Th\'eor. {\bf 60} (1994), no. 2,  211--239.}
\MScN{MR1270296} \LINK{http://archive.numdam.org/article/AIHPA_1994__60_2_211_0.pdf}

\bibitem{GinibreV}{Ginibre J. and Velo G.} {Scattering theory in the energy space for a
class of nonlinear Schr\"o\-din\-ger equations, J. Math. Pures  Appl. (9)  {\bf 64} (1985),
no. 4, 363--401.}
\MScN{MR0839728}

\bibitem{Kato1}{Kato T.} {On nonlinear Schr\"o\-din\-ger equations, Ann. Inst. H.~Poin\-ca\-r\'e Phys. Th\'eor. {\bf 46} (1987), no. 1, 113--129.}
\MScN{MR0877998} \LINK{http://www.numdam.org/item?id=AIHPA_1987__46_1_113_0}

\bibitem{Kato2}{Kato T.} {An $L^{q,r}$-theory for nonlinear Schr\"o\-din\-ger
equations. In {\it Spectral and Scattering Theory and Applications}, 223--238,
Adv. Stud. Pure Math.,  {\bf 23} , Math. Soc. Japan, Tokyo, 1994.}
\MScN{MR1275405}

\bibitem{Levine1}{Levine H.A.} {The role of critical exponents in blow-up theorems,
SIAM Rev.  {\bf 32} (1990), no. 2, 262--288.}
\MScN{MR1056055} \DOI{10.1137/1032046}

\bibitem{NakanishiO}{Nakanishi K. and Ozawa T.} {Remarks on scattering for nonlinear
Schr\"o\-din\-ger equations, NoDEA Nonlinear Differential Equations Appl. {\bf 9} (2002), no. 1, 45--68.}
\MScN{MR1891695} \DOI{10.1007/s00030-002-8118-9}

\bibitem{OkazawaY}{Okazawa N. and Yokota T.} {Monotonicity method for the complex Ginzburg-Landau equation, including smoothing effect. Proceedings of the Third World Congress of Nonlinear Analysts, Part 1 (Catania, 2000).  Nonlinear Anal.  {\bf 47}  (2001),  no. 1, 79--88.}
\MScN{MR1970632} \DOI{10.1016/S0362-546X(01)00158-4}

\bibitem{Strauss}{Strauss W.A.} {Nonlinear scattering theory, in {\it Scattering Theory
in Mathematical Physics}, NATO Advanced Study Institutes Series Volume  {\bf 9},  53-78. Springer Netherlandsl, 1974.}
\DOI{10.1007/978-94-010-2147-0_3}

\bibitem{Vilela}{Vilela M.-C.} {Inhomogeneous Strichartz estimates for the
Schr\"o\-din\-ger equation, Trans. Amer. Math. Soc.  {\bf 359} (2007), no. 5, 2123--2136.}
\MScN{MR2276614} \DOI{10.1090/S0002-9947-06-04099-2 }

\end{thebibliography}
\end{document}